\documentclass[11pt]{article}

\usepackage[a4paper,margin=1in]{geometry}
\usepackage{amsmath,amssymb,amsthm,mathtools}
\usepackage{microtype}
\usepackage{hyperref}

\hypersetup{
  colorlinks=true,
  linkcolor=blue,
  citecolor=red,
  urlcolor=blue
}

\numberwithin{equation}{section}

\newtheorem{theorem}{Theorem}[section]
\newtheorem{proposition}[theorem]{Proposition}
\newtheorem{lemma}[theorem]{Lemma}
\newtheorem{remark}[theorem]{Remark}

\newcommand{\R}{\mathbb{R}}
\newcommand{\dd}{\,\mathrm{d}}
\newcommand{\Div}{\operatorname{Div}}
\newcommand{\tr}{\operatorname{tr}}
\newcommand{\la}{\langle}
\newcommand{\ra}{\rangle}

\title{Local Propagation of Momentum Moments for the Relativistic Vlasov--Poisson System}
\author{Zhaopeng Wang\\
School of Mathematics and Statistics\\ Beijing Institute of Technology\\
Beijing 100081, China\\
\texttt{zpwang@bit.edu.cn}}
\date{}

\begin{document}

\maketitle

\begin{abstract}
This paper proves local propagation of momentum moments of order
$k\ge5/2$ for the three-dimensional relativistic Vlasov--Poisson
system. In the Coulomb case, this improves the previous result which
requires integer moments of order $k\ge3$ \cite{LeopoldSaffirio}. The proof adapts the
critical density method developed in our recent work on the classical
Vlasov--Poisson equation \cite{WangCritical}. We introduce a weighted
kinetic tensor adapted to the relativistic velocity and obtain the
critical density estimate. This estimate then allows us to close the
moment argument by a short-time bootstrap.
\end{abstract}

\medskip
\noindent\textbf{Keywords.} Relativistic Vlasov--Poisson system; moment propagation; weak solutions.

\medskip
\noindent\textbf{2020 Mathematics Subject Classification.} 35Q83, 35Q49, 82C40.

\section{Introduction}

The Vlasov equation describes a large collection of collisionless particles moving under a self-consistent force. In a plasma, the force is electrostatic. In a stellar model, it is attractive. When the particle speed is not close to the speed of light, the classical Vlasov--Poisson system is a natural model. When relativistic effects are kept, the transport velocity has to be changed.

In this paper we consider the three-dimensional relativistic Vlasov--Poisson system in the whole space. We use units in which the speed of light is one. The unknown $f=f(t,x,p)\ge0$ is the particle density in phase space, where $x\in\R^3$ is the position and $p\in\R^3$ is the momentum. The Cauchy problem is
\begin{equation}
\left\{
\begin{aligned}
 &\partial_t f+\widehat p\cdot\nabla_x f+\gamma E\cdot\nabla_p f=0,
 \qquad f|_{t=0}=f_0,\\
 &\rho(t,x)=\int_{\R^3}f(t,x,p)\dd p,
 \qquad E=-\nabla U,\qquad -\Delta U=\rho.
\end{aligned}
\right.
\label{eq:RVP}
\end{equation}
Here $\widehat p=p/\sqrt{1+|p|^2}$ is the relativistic velocity. The function $U=U(t,x)$ is the self-consistent potential, $E=E(t,x)$ is the self-consistent electric field, and $\rho=\rho(t,x)$ is the spatial density. We write $\la p\ra=\sqrt{1+|p|^2}$ and normalize $U$ by $U(t,x)\to0$ as $|x|\to\infty$. The choice $\gamma=1$ gives the repulsive plasma case, while $\gamma=-1$ gives the attractive case. The relativistic total energy is
\[
 \mathcal E(t)=\iint_{\R^6}\la p\ra f(t,x,p)\dd x\dd p
 +\frac{\gamma}{2}\int_{\R^3}|E(t,x)|^2\dd x.
\]

The system \eqref{eq:RVP} is the relativistic counterpart of the classical Vlasov--Poisson equation. If the relativistic velocity $\widehat p$ is replaced by the momentum $p$, one recovers the classical Vlasov--Poisson equation. This change is important for moments. In the classical model, the kinetic energy is a second velocity moment. In the relativistic model, the particle-energy term is
\[
 \iint_{\R^6}\la p\ra f\dd x\dd p,
\]
which is exactly the first momentum moment in the notation used below. Thus the kinetic part of the relativistic energy has one fewer power of the phase variable than in the classical problem.

The classical Vlasov--Poisson theory is much more complete. In three dimensions, global classical solutions for general smooth data were obtained by Lions and Perthame \cite{LionsPerthame}, Pfaffelmoser \cite{Pfaffelmoser}, and Schaeffer \cite{Schaeffer}. In the classical problem the phase variable $v$ is the particle velocity, and the $k$th velocity moment is
\[
 \iint_{\R^6}|v|^k f(t,x,v)\dd x\dd v.
\]
Lions and Perthame \cite{LionsPerthame} propagated moments above order three. Pallard \cite{Pallard} later lowered the whole-space threshold to every order strictly larger than two. Moment propagation has also been studied for several related Vlasov--Poisson models, including models with massless electrons \cite{GriffinPickeringIacobelliCPDE,GriffinPickeringIacobelliKRM}, magnetized Vlasov--Poisson equations \cite{BenPoratGagnebinIacobelliJunne}, and the plasma--charge model \cite{WangPlasmaCharge}.

In \cite{WangCritical}, we introduced a different approach to low-order moment propagation. A positive kinetic tensor and Serre's compensated-integrability theorem \cite{Serre2018,Serre2019} give the critical bounds
\[
 \rho\in L^2_{t,x},
 \qquad
 E\in L^2_tL^6_x.
\]
These bounds provide the key estimates for the moment argument. The present paper adapts this mechanism to the relativistic case.

The relativistic theory is less complete. Glassey and Schaeffer \cite{GlasseySchaeffer1985} proved global classical solutions for spherically symmetric repulsive data. They later treated cylindrically symmetric plasma data under an angular-momentum condition \cite{GlasseySchaeffer2001}. More recently, Wang \cite{WangJSP} obtained global solutions for a class of large radial and cylindrical plasma data without compact momentum support. For general large data, however, global classical existence for the
three-dimensional repulsive relativistic Vlasov--Poisson system remains open. For the attractive problem, Glassey and Schaeffer \cite{GlasseySchaeffer1985} proved finite-time blow-up with negative energy, while Lemou, M\'ehats and Rapha\"el \cite{LemouMehatsRaphael} later constructed stable self-similar blow-up solutions. Had\v{z}i\'c and Rein \cite{HadzicRein} studied global existence and stability near steady
states, and Kiessling and Tahvildar-Zadeh \cite{KiesslingTahvildar}
obtained global-existence and blow-up criteria.

Moment propagation is also less understood in the relativistic setting. Here $p$ is the momentum, while the physical velocity is $\widehat p$. Since $|\widehat p|<1$, moments of the physical velocity are bounded by the mass and carry no high-momentum information. The nontrivial quantities are the momentum moments. We set
\begin{equation}\label{eq:m}
 m_k(t,x)=\int_{\R^3}\la p\ra^k f(t,x,p)\dd p,
 \qquad
 M_k(t)=\int_{\R^3}m_k(t,x)\dd x.
\end{equation}
In particular, the particle-energy term above is $M_1(t)$. Since the mass is finite, finiteness of $M_k(t)$ is equivalent to finiteness of the corresponding $|p|^k$ moment. Wang \cite{WangJSP} pointed out that whether the analogue of Lions-Perthame \cite{LionsPerthame} holds for the three-dimensional relativistic Vlasov--Poisson system remains an open problem. The present paper addresses one aspect of this question by establishing a local-in-time propagation result for low-order momentum moments.

Leopold and Saffirio \cite{LeopoldSaffirio} proved local-in-time propagation of momentum moments for relativistic Vlasov equations with singular interactions. In the three-dimensional Coulomb case, their result gives local-in-time propagation for integer moments of order $k\ge 3$. This paper lowers the threshold to $k\geq5/2$. The main new ingredient is a weighted kinetic tensor adapted to the relativistic velocity. The weight is chosen so that the tensor satisfies the same critical determinant bound as in the classical case.

Our main result is the following.

\begin{theorem}\label{thm:main}
For either choice of $\gamma$, assume that $0\le f_0\in L^1(\R^6)\cap L^\infty(\R^6)$ and
\[
 M_k(0)<\infty
 \qquad\text{for some }k\ge\frac52.
\]
Then there exist $T>0$ and a weak solution $0\leq f\in L^\infty(0,T;L^1(\R^6)\cap L^\infty(\R^6))$  of \eqref{eq:RVP} such that

\begin{equation}
 \sup_{0\le t\le T}M_k(t)<\infty.
\label{eq:main-moment}
\end{equation}
The time $T$ and the bound in \eqref{eq:main-moment} depend only on $k$, $\|f_0\|_1$, $\|f_0\|_\infty$, and $M_k(0)$.  
\end{theorem}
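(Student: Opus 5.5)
We would prove Theorem~\ref{thm:main} by the a priori estimate plus compactness route. First regularize the data (truncate $f_0$ in $x,p$ and mollify) so that smooth local solutions $f^\varepsilon$ exist, with $\|f^\varepsilon(t)\|_1,\|f^\varepsilon(t)\|_\infty$ conserved and $M_k^\varepsilon(0)\le M_k(0)+1$. We then aim for a bound on $\sup_{t\le T}M_k^\varepsilon(t)$ with $T$ and the bound depending only on $k,\|f_0\|_1,\|f_0\|_\infty,M_k(0)$. Once this is available, the usual weak-compactness argument passes to the limit. The field $E^\varepsilon$ converges strongly in $L^2_{loc}$ by velocity averaging, helped by the moment bound, which gives tightness in $p$. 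The nonlinear term $E^\varepsilon f^\varepsilon$ then converges in distributions, and $M_k$ passes to the limit by lower semicontinuity.

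\textbf{Critical density estimate.} Following \cite{WangCritical}, we build a positive semidefinite, divergence-free-in-$(t,x)$ $4\times4$ tensor and apply Serre's compensated integrability \cite{Serre2018,Serre2019}. A natural candidate uses the weight $\la p\ra^{a}$ with the relativistic velocity:
\[
 A=\int\la p\ra^{a}\begin{pmatrix}1&\widehat p^{\,T}\\ \widehat p&\widehat p\otimes\widehat p\end{pmatrix}f\dd p
\]
with a source correction from the force term. Serre's inequality gives
\[
 \int_0^T\!\!\int(\det A)^{1/3}\dd x\dd t\lesssim \Big(\sup_t\int A_{00}\,\dd x\Big)^{4/3}+\text{(source terms)}.
\]
The key algebraic step is to choose $a$, probably $a=3$ so that $\la p\ra^{a}$ cancels the degeneracy $\det(\widehat p\text{-moment matrix})\sim\la p\ra^{-2}$ near $|\widehat p|\to1$. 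Combined with $\|f\|_\infty$, this should yield a lower bound $(\det A)^{1/3}\gtrsim \rho^{2}$ up to constants, just as in the classical determinant bound. Hence $\rho\in L^2_{t,x}$ and $E\in L^2_tL^6_x$ on $[0,T]$, controlled by $M_a$ and the divergence source. That source is $\gamma\int E\cdot\nabla_p(\la p\ra^a\widehat p\ldots)f$, which is bounded by $\|E\|\,m_{a-1}$-type terms.

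\textbf{Moment bootstrap.} Differentiating $M_k$ along the flow gives
\[
 \frac{d}{dt}M_k=\gamma k\int E\cdot \widehat p\,m\ldots\le k\int|E|\,m_{k-1}\dd x.
\]
Interpolation with $\|f\|_\infty$ gives $\|m_{k-1}\|_{L^{6/5}}\lesssim \|f\|_\infty^{\theta}M_k^{\beta}$. The threshold $k\ge5/2$ should be exactly where $m_{k-1}\in L^{6/5}_x$ is controlled by $M_k$ and $\|f\|_\infty$, with exponent $\beta$ at most one or mildly superlinear. The standard kinetic interpolation $\|m_j\|_{L^{(k+3)/(j+3)}}\lesssim \|f\|_\infty^{(k-j)/(k+3)}M_k^{(j+3)/(k+3)}$ at $j=k-1$ gives the exponent $(k+3)/(k+2)$, which equals $6/5$ only when $k=3$. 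So we instead pair with the critical bound: $E\in L^2_tL^6_x$ and Hölder in time give
\[
 M_k(t)\le M_k(0)+C\,t^{1/2}\|E\|_{L^2_tL^6_x}\sup_s M_k(s)^{\beta},
\]
and the quantitative requirement $k\ge5/2$ should come from balancing $\beta$ here.

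\textbf{Closing the argument and main obstacle.} Since the critical estimate itself depends on $M_3$-type quantities, we close everything in a single short-time bootstrap. Assume $\sup_{[0,T]}M_k\le 2M_k(0)+1$, derive the critical bounds from it, and for $T$ small recover a strict improvement; continuity then gives a uniform $T$. The main obstacle is the determinant step: finding a weight $\la p\ra^{a}$ such that
\begin{itemize}
\item the tensor stays divergence-free modulo controllable sources,
\item $\det A\gtrsim\rho^{6}$ holds despite $|\widehat p|<1$,
\item the required moment $M_a$ and the sources are bounded by $M_k$ with $k=5/2$.
\end{itemize}
Our first attempt would be to take $a=k-?$ with $\la p\ra^{a}$ in the time component only, check Serre's lower bound via Cauchy–Binet, and tune $a$ to match the $5/2$ threshold.
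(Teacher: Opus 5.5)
There is a genuine gap, and it is the step you flag as the ``main obstacle'': the determinant estimate is never established, and your tentative choices would not reach $k=5/2$. The missing idea is that the weight is dictated by the Jacobian of $p\mapsto q=\widehat p$, namely $\dd q=\la p\ra^{-5}\dd p$ (two tangential eigenvalues $\la p\ra^{-1}$, one radial eigenvalue $\la p\ra^{-3}$), not by a $\la p\ra^{-2}$ degeneracy.

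Take $A_g=\int w\,z\otimes z\,g\dd p$ with $z=(1,\widehat p)$ and the weight $w$ in \emph{every} entry. The Schur complement gives $\det A_g=M^4\det\mathcal V_g$, where $M=\int wg\dd p$ and $\mathcal V_g$ is the $w$-weighted covariance of $\widehat p$. Set $X=(q-\bar q)^T\mathcal V_g^{-1}(q-\bar q)$. The function $\frac wM(1+X)$ has $g$-integral $4$, so Markov's inequality yields a set $\Omega$ carrying half the mass. Hence $|\Omega|\ge\rho_g/(2\|g\|_\infty)$, while $|\Omega|\le R^2M^2\int w^{-2}(1+X)^{-2}\dd p$ with $R=8/\rho_g$. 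The last integral is at most $\int_{B_1}(1+X)^{-2}\dd q\le C(\det\mathcal V_g)^{1/2}$ as soon as $w^{-2}\le\la p\ra^{-5}$. This gives $\det A_g\ge c\|g\|_\infty^{-2}\rho_g^6$, and the minimal such weight is $w=\la p\ra^{5/2}$.

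Your guess $a=3$ also satisfies $w^{-2}\le\la p\ra^{-5}$. But then the Serre step needs $\sup_tM_3$ (as you note yourself), which $M_{5/2}$ does not control, so you would at best recover the known threshold $k\ge3$. Putting the weight in the time component only is not an option either. It destroys the identity $(\Div A)_\beta=\int w z_\beta(\partial_t+\widehat p\cdot\nabla_x)f\dd p$, which is what turns the divergence into $\gamma\int fE\cdot\nabla_p(wz_\beta)\dd p$ and gives $|\Div A|\le C|E|m_{3/2}$. With the correct weight, you still need $m_{3/2}\in L^{6/5}_x$: interpolate the $L^1_x$ bound (mass and $M_{5/2}$) with the $L^{11/9}_x$ bound from kinetic interpolation. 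The source term is then absorbed via $\|E\|_{L^2_tL^6_x}\le C\|\rho\|_{L^2_{t,x}}$ and Young's inequality.

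Your moment step has a second gap, and it also misplaces the origin of the threshold. Pairing $m_{k-1}$ with $E\in L^2_tL^6_x$ requires $m_{k-1}\in L^{6/5}_x$. Interpolating $L^1_x$ with $L^{(k+3)/(k+2)}_x$ gives this for every $k\le3$, so nothing special happens at $k=5/2$ in this step; the $5/2$ comes entirely from the tensor weight. For $k>3$, however, $(k+3)/(k+2)<6/5$, and $M_k$, $\|f_0\|_1$, $\|f_0\|_\infty$ give no $L^{6/5}_x$ control of $m_{k-1}$. So your argument does not cover that part of the theorem.

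What is needed there is a higher-integrability field bound. Let $r_k=3(k+3)/(k+6)$. From $\|\rho\|_{r_k}\le\|\rho\|_2^{2/3}\|\rho\|_{(k+3)/3}^{1/3}$ and Hardy--Littlewood--Sobolev one gets
$\|E\|_{L^3_tL^{k+3}_x}\le C\|\rho\|_{L^2_{t,x}}^{2/3}\bigl(\sup_tM_k\bigr)^{1/(k+3)}$.
This exponent is dual to $m_{k-1}\in L^{(k+3)/(k+2)}_x$ and gives $\frac{\dd}{\dd t}(1+M_k)^{1/(k+3)}\le C\|E\|_{k+3}$. Combined with the critical $L^2_{t,x}$ density bound, which is controlled by $M_{5/2}\lesssim M_k^{5/(2k)}$, the short-time bootstrap then closes as you describe.
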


\begin{remark}
The threshold $k\ge5/2$ is tied to the relativistic tensor introduced
in this paper and may not be optimal. Our main purpose is to develop
this tensor structure and show that the critical density method can be
adapted to the relativistic setting. A different tensor or a sharper
field estimate may lead to a lower threshold. We do not pursue this
question here.
\end{remark}
\begin{remark}
The weak solution in Theorem \ref{thm:main} may also be chosen to conserve the relativistic total energy $\mathcal E(t)$ on $[0,T]$. This follows from the critical density estimate below and the energy-conservation argument in \cite{WangCritical}.
\end{remark}
\begin{remark}
For classical solutions, Proposition~\ref{prop:density} also gives the continuation criterion
\[
\sup_{0\le t<T_{\max}}M_{5/2}(t)<\infty
\quad\Longrightarrow\quad
T_{\max}=\infty.
\]
Indeed, the density bound yields $\rho\in L^2((0,T_{\max})\times\mathbb R^3)$ and the standard characteristic argument \cite{Rein} gives bounded momentum support.
\end{remark}

The main point is to find a relativistic version of the critical density argument in \cite{WangCritical}. For this purpose, we introduce the weighted tensor
\[
A(t,x)=\int_{\R^3}\la p\ra^{5/2}
\binom{1}{\widehat p}\otimes
\binom{1}{\widehat p}
f(t,x,p)\dd p.
\]
Although this tensor is different from the one used for the classical Vlasov--Poisson equation, it satisfies the same critical determinant estimate,
\[
\det A\ge c\|f_0\|_\infty^{-2}\rho^6.
\]
Serre's compensated-integrability estimate \cite{Serre2018,Serre2019} then reduces the density estimate to the control of the divergence of $A$. The new divergence term is bounded by $|E|m_{3/2}$ with $m_k$ as defined in \eqref{eq:m}. Kinetic interpolation gives the corresponding $L^{6/5}_x$ bound for
$m_{3/2}$, while the Poisson equation controls $E$ by $\rho$. These
estimates close the Serre inequality and show that control of the \(5/2\)-moment yields
\[
\rho\in L^2_{t,x},\quad E\in L^2_t L^6_x.
\]
Hence the critical density argument can still be closed in the
relativistic case. The rest of the proof combines this critical density estimate with standard kinetic interpolation. For every $k\ge5/2$, interpolation first gives
\[
 M_k
 \longrightarrow
 M_{5/2}
 \longrightarrow
 \rho\in L^2_{t,x}.
\]
The same $k$th moment also gives
\[
 \rho\in L^\infty_tL^{(k+3)/3}_x.
\]
Interpolating this bound with the critical $L^2_{t,x}$ estimate and
using the Poisson equation yields
\[
 E\in L^3_tL^{k+3}_x.
\]
Therefore the estimates close the cycle
\[
 M_k
 \xrightarrow{\mathrm{interpolation}}
 M_{5/2}
 \xrightarrow{\mathrm{Prop.~\ref{prop:density}}}
 \rho\in L^2_{t,x}
 \xrightarrow[\text{with }M_k]{\mathrm{Lem.~\ref{lem:field-k}}}
 E\in L^3_tL^{k+3}_x
 \xrightarrow{\mathrm{Prop.~\ref{prop:moment}}}
 M_k.
\]
A short-time bootstrap then gives the desired propagation of $M_k$.

The rest of the paper is organized as follows. Section 2 proves the critical density estimate and Section 3 proves the propagation of momentum moments. We work with smooth regularizations and establish estimates that are uniform in the regularization parameter. The passage to a weak solution then follows from the standard compactness argument, which we do not repeat here.

\section{A critical density estimate}

Throughout this section, we work with a smooth solution corresponding
to a nonnegative smooth compactly supported approximation of the
initial datum. We suppress the approximation parameter. The
approximations are chosen so that
\[
 \|f(0)\|_1\le \|f_0\|_1,
 \qquad
 \|f(0)\|_\infty\le \|f_0\|_\infty.
\]
For each fixed approximation, all the quantities used below are finite
on the time interval under consideration. The estimates obtained in this section
are independent of the approximation parameter and of the size of the
support.

\subsection{The relativistic tensor}

We use Serre's Euclidean compensated-integrability theorem
\cite{Serre2018,Serre2019}. We state only the four-dimensional form
needed here.

\begin{theorem}\label{thm:serre}

Let $B\in L^1(\R^4;\mathrm{Sym}_4^+)$ and assume that $\Div B$ is a
finite $\R^4$-valued Radon measure. Then
\[
 \int_{\R^4}(\det B)^{1/3}\dd t\dd x
 \le
 C\|\Div B\|_{\mathcal M(\R^4)}^{4/3}.
\]

\end{theorem}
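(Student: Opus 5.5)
This is Serre's compensated-integrability theorem, quoted from \cite{Serre2018,Serre2019}. In the paper it is used as a black box. If I had to supply a proof, I would follow Serre's optimal-transport argument in general dimension $d$ (here $d=4$) and show
\[
\int_{\R^d}(\det B)^{1/(d-1)}\le C_d\|\Div B\|_{\mathcal M}^{d/(d-1)} .
\]
For $d=4$ this is exactly the claimed bound with exponents $1/3$ and $4/3$.

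\emph{Step 1 (reduction to smooth data).} Mollify $B$ with a standard kernel. This gives $B_\varepsilon=B*\eta_\varepsilon$, which is smooth, still takes values in $\mathrm{Sym}_d^+$, and satisfies $\|\Div B_\varepsilon\|_{L^1}\le\|\Div B\|_{\mathcal M}$. Since $B_\varepsilon\to B$ a.e. along a subsequence, Fatou's lemma lets us pass to the limit in the left-hand side. It therefore suffices to treat smooth $B$. Next I would truncate spatially with a cutoff $\chi_R$. The extra divergence term $B\nabla\chi_R$ has $L^1$ norm $O(\|B\|_{L^1(|x|\sim R)})\to0$, so we may also assume $B$ is compactly supported. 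Finally, replacing $B$ by $B+\delta\chi I$ on the support makes $B$ positive definite there; this perturbation vanishes as $\delta\to0$.

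\emph{Step 2 (transport map and duality).} Set $g=(\det B)^{1/(d-1)}\in L^1$ and $I=\int g$. Choose $r$ so that $|B_r|=I$. Let $\nabla\phi$ be the Brenier map pushing $g\,dx$ onto the uniform measure on the ball $B_r$. Then $\phi$ is convex, $|\nabla\phi|\le r$, and the Monge--Amp\`ere equation $\det D^2\phi=g$ holds in the Alexandrov/Brenier sense. Now use the AM--GM inequality for positive semidefinite matrices,
\[
(\det B)^{1/d}(\det S)^{1/d}\le \tfrac1d\tr(BS),
\]
with $S=D^2\phi$. This gives
\[
I=\int(\det B)^{1/d}g^{1/d}\le\tfrac1d\int\tr(B\,D^2\phi).
\]
Integrating by parts, which is justified by compact support, yields $\int \tr(B\,D^2\phi)=-\int\Div B\cdot\nabla\phi\le r\|\Div B\|_{L^1}$. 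Since $r=c_dI^{1/d}$, we obtain $I\le C I^{1/d}\|\Div B\|$, and hence $I^{(d-1)/d}\le C\|\Div B\|$. This is the claim.

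\emph{Main obstacle.} The hard step is justifying Step 2 when $\phi$ has low regularity. For a general convex $\phi$, $D^2\phi$ is only a measure whose absolutely continuous part equals $g$ a.e. by McCann's theorem. The integration by parts holds with the full distributional Hessian. The inequality still goes the right way, because the singular part of $D^2\phi$ is positive semidefinite and $B\ge0$, so it only increases $\int\tr(B\,D^2\phi)$.

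The first alternative I would try is to mollify $\phi$ instead: $\phi_\varepsilon$ is smooth and convex, and $\int\tr(B\,D^2\phi_\varepsilon)=-\int\Div B\cdot\nabla\phi_\varepsilon$. Moreover $|\nabla\phi_\varepsilon|\le r$, and $\liminf\int(\det B)^{1/d}(\det D^2\phi_\varepsilon)^{1/d}\ge I$ by concavity of $\det^{1/d}$ and Jensen.

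A second route would be to use Caffarelli regularity, solving the Monge--Amp\`ere problem with smooth positive densities on convex targets. Either route closes the argument.
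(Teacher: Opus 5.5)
Your argument is correct. The paper does not prove this statement; it imports Serre's theorem as a black box, so the only real comparison is with Serre's own argument. Your Step~2 reproduces that argument: a Brenier map onto a ball of volume $I$, the matrix AM--GM inequality $(\det B)^{1/d}(\det S)^{1/d}\le\frac1d\tr(BS)$, and one integration by parts against $\nabla\phi$ with $|\nabla\phi|\le r$. This gives $I\le\frac{r}{d}\|\Div B\|$ with $r=(I/\omega_d)^{1/d}$, where $\omega_d$ is the volume of the unit ball.

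Where you differ from the smooth presentation, which gets a classical Hessian from Caffarelli's regularity theory, is in handling the weak regularity of $\phi$. Your treatment there is sound and makes that theory unnecessary:
\begin{itemize}
\item Take $\phi$ to be the Legendre transform of a convex function that is finite only on $\overline{B_r}$. Then $\phi$ is convex and $r$-Lipschitz on all of $\R^d$.
\item Its distributional Hessian splits as $D^2_{ac}\phi\,dx+D^2_s\phi$ with $D^2_s\phi\ge0$.
\item By Alexandrov's theorem, the density $D^2_{ac}\phi$ agrees a.e.\ with the Alexandrov Hessian. McCann's theorem then gives $\det D^2_{ac}\phi=g$ for $g\,dx$-a.e.\ $x$.
\item Since $B\ge0$, the measure $\tr(B\,D^2_s\phi)$ is nonnegative and only enlarges the left side, as you say.
\end{itemize}
Your mollified-$\phi$ variant also works. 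Superadditivity of $\det^{1/d}$ on the positive cone lets you discard the singular part before applying Jensen, which gives $(\det D^2\phi_\varepsilon)^{1/d}\ge g^{1/d}*\eta_\varepsilon$.

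Two small consequences follow. First, the perturbation $B+\delta\chi I$ is superfluous on this route. It would only matter on the Caffarelli route, which would also require a convex source domain on which the density is bounded below. Second, your Step~1 reduction is genuinely needed rather than cosmetic. For a general $B\in L^1$, the integrability of $g=(\det B)^{1/(d-1)}$ is exactly what is being proved, so you cannot assume it when setting up the transport problem.
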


We introduce a relativistic kinetic tensor adapted to the velocity
$\widehat p$. It is different from the tensor used in
\cite{WangCritical}. Define
\begin{equation}
 z(p)=\binom{1}{\widehat p},
 \qquad
 A(t,x)
 =
 \int_{\R^3}
 \la p\ra^{5/2}z(p)\otimes z(p)f(t,x,p)\dd p.
\label{eq:tensor}
\end{equation}
The tensor $A$ is symmetric and positive semidefinite. Indeed, for every
$\xi\in\R^4$,
\[
 \xi^TA\xi
 =
 \int_{\R^3}
 \la p\ra^{5/2}
 (\xi\cdot z(p))^2
 f(t,x,p)\dd p
 \ge0.
\]

The main point is the following determinant estimate.

\begin{lemma}\label{lem:det}

Let $0\le g\in L^1(\R^3)\cap L^\infty(\R^3)$ and assume that
\[
 \int_{\R^3}\la p\ra^{5/2}g(p)\dd p<\infty.
\]
Set
\[
 \rho_g=\int_{\R^3}g(p)\dd p,
 \qquad
 A_g
 =
 \int_{\R^3}
 \la p\ra^{5/2}
 z(p)\otimes z(p)g(p)\dd p.
\]
Then
\begin{equation}
 \det A_g
 \ge
 c\|g\|_\infty^{-2}\rho_g^6.\nonumber
\end{equation}
\end{lemma}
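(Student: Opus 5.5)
The plan is to factor $\det A_g$ through a Schur complement, which reduces the lemma to a lower bound for a single quadratic form. This form is then bounded below by a bathtub/slab argument that uses only $g\le\|g\|_\infty$ and $\int g=\rho_g$. By homogeneity, replacing $g$ by $\lambda g$ scales both sides by $\lambda^4$. So we may normalize $\|g\|_\infty=1$, and the target becomes $\det A_g\ge c\rho_g^6$. Set $w(p)=\la p\ra^{5/2}$ and $R=\rho_g^{1/3}$. Then $R$ is the radius of a ball carrying mass $\rho_g$ when $g\equiv1$, and the target reads $\det A_g\ge c\rho_g^4R^6$.

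Write $A_g$ in block form with $a_{00}=\int wg\,\dd p\ge\rho_g$, since $w\ge1$. Let $\bar u=a_{00}^{-1}\int w\widehat p\,g\,\dd p$ be the weighted mean velocity. The Schur complement is the weighted covariance
\[
S=\int_{\R^3} w(p)\,(\widehat p-\bar u)\otimes(\widehat p-\bar u)\,g(p)\dd p,
\qquad \det A_g=a_{00}\det S\ge \rho_g\,\lambda_{\min}(S)^3 .
\]
It therefore suffices to prove $e^TSe\ge c\rho_g R^2$ for every unit $e\in\R^3$, uniformly in $\bar u$. Both heuristic extremes are consistent with this. If $R\ll1$, the bathtub configuration $g=\mathbf 1_{|p|\le R}$ gives $\widehat p\approx p$, $w\approx1$, and $e^TSe\sim\rho_gR^2$, which is sharp. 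If $R\gg1$, the directions $\widehat p$ spread over the sphere and $w\sim R^{5/2}$, giving $e^TSe\gtrsim\rho_gR^{5/2}\ge\rho_gR^2$. So the statement is plausible, and the exponent $5/2$ leaves room at large momenta.

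To prove the quadratic form bound, fix $e$ and put $s=\bar u\cdot e$. For $\delta>0$ let
\[
\Sigma_\delta=\{p:\ |\widehat p\cdot e-s|<\delta\}.
\]
Then
\[
e^TSe\ \ge\ \delta^2\int_{\Sigma_\delta^c} w\,g\,\dd p\ \ge\ \delta^2\Big(\rho_g-\int_{\Sigma_\delta} w\, g\,\dd p\Big)\quad\text{or}\quad \delta^2\Big(\rho_g-|\Sigma_\delta\cap\{\dots\}|\Big),
\]
according to whether the weight is kept or dropped. The key geometric input is a weighted measure bound for the preimage slab $\Sigma_\delta$. The map $p\mapsto\widehat p$ is a diffeomorphism onto the unit ball with Jacobian $\la p\ra^{-5}$. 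A slab of width $\delta$ in the $\widehat p$-ball therefore pulls back to a set whose Lebesgue measure grows with $|p|$. I plan to handle this in two regimes. For $R\lesssim1$, restrict to $\{|p|\le 2R\}$. Since $g\le1$, at most $\rho_g/2$ of the mass can lie outside $\{|p|\le CR\}$ in a region of volume $\lesssim R^3$ away from the slab. Choosing $\delta=cR$ makes $|\Sigma_\delta\cap\{|p|\le CR\}|\lesssim\delta R^2\le\rho_g/4$, which gives $e^TSe\gtrsim\rho_gR^2$. For $R\gtrsim1$, I split the mass into $\{|p|\le K\}$ and $\{|p|>K\}$. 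On the outer part the weight $w\ge K^{5/2}$ is used to absorb the Jacobian loss. Balancing the measure of $\Sigma_\delta$ in the shell $\{|p|\sim K\}$ against $\rho_g$ and the weight gains a factor $K^{5/2}$. This should give $e^TSe\gtrsim\rho_g\min(R^2,R^{5/2})\cdot c$.

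The main obstacle is the large-$R$ slab estimate. Near $|\widehat p|=1$ the preimage of a thin slab has volume comparable to $\delta K^2\la K\ra^3$ within $\{|p|\le K\}$. This is much larger than the naive $\delta K^2$, so mass can hide inside the slab at high momentum. The weight $\la p\ra^{5/2}$ must compensate for this. If the slab/Jacobian bookkeeping does not close directly, I would fall back on the Cauchy--Binet representation
\[
\det A_g=\tfrac1{24}\int\!\!\int\!\!\int\!\!\int\prod_{i=1}^4 w(p_i)g(p_i)\,\bigl(\det[z(p_1),\dots,z(p_4)]\bigr)^2\dd p_1\cdots\dd p_4,
\]
in which $|\det[z(p_i)]|$ is $6$ times the volume of the tetrahedron with vertices $\widehat p_i$. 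The lower bound then reduces to showing that, under $g\le1$, a positive fraction of the quadruples of mass spans tetrahedra of volume $\gtrsim\min(R,1)^3$. This is a bathtub statement that can be checked separately in the small-$R$ and large-$R$ regimes.
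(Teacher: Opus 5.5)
Your normalization $\|g\|_\infty=1$ is fine. Your Schur-complement identity $\det A_g=a_{00}\det S$ is also correct; it is the paper's $\det A_g=M^4\det\mathcal V_g$ with $M=a_{00}$ and $\mathcal V_g=S/M$. The next reduction, however, is fatal. The claim $e^TSe\ge c\rho_gR^2=c\rho_g^{5/3}$ for every unit $e$ is false, so no slab argument can prove it.

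Take the pancake $g=\mathbf{1}_{\{|p_1|<\epsilon,\,|p_2|<L,\,|p_3|<L\}}$ with $8\epsilon L^2=\rho_g$ fixed and $L\ge1$. By symmetry $\bar u=0$ and $S$ is diagonal, and
\[
 e_1^TSe_1=\int_{\R^3}\la p\ra^{1/2}p_1^2\,g\dd p\le C\epsilon^3L^{5/2}=C\rho_g^3L^{-7/2}\longrightarrow0\qquad(L\to\infty).
\]
Meanwhile $e_2^TSe_2$, $e_3^TSe_3$ and $a_{00}$ are all of order $\rho_gL^{5/2}$, so $\det A_g\sim\rho_g^6L^4$ still obeys the lemma. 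The determinant bound holds only because squeezing the mass in one direction forces it, via $g\le1$, to spread in the others. Replacing $\det S$ by $\lambda_{\min}(S)^3$ discards exactly this compensation.

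The failure is not confined to large momenta. Already at $L=1$ one gets $e_1^TSe_1\lesssim\rho_g^3\ll\rho_g^{5/3}$ for small $\rho_g$. The ball is a natural test case for the determinant, but not for a single directional variance, which pancakes make arbitrarily small. Concretely, your step ``at most $\rho_g/2$ of the mass can lie outside $\{|p|\le CR\}$'' has no justification. The constraint $g\le1$ forces mass to spread out; it never confines it. The slab $\Sigma_\delta$ has infinite Lebesgue measure in $p$, so all of the mass can sit inside it.

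The Cauchy--Binet fallback is at least a determinantal statement, but it is only asserted, and as formulated it cannot suffice for $\rho_g>1$. Using $w\ge1$ and tetrahedron volumes $\gtrsim\min(R,1)^3=1$ yields only $\det A_g\gtrsim\rho_g^4$, short of $\rho_g^6$. In fact $w\ge1$ alone can never give more than $C\rho_g^4$, since tetrahedra in $B_1$ have bounded volume. So the weights would have to enter essentially, and you do not say how.

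The missing idea is to bound $\det\mathcal V_g$ as a whole by a volume argument on an ellipsoid rather than a slab. Set $X=(\widehat p-\bar u)^T\mathcal V_g^{-1}(\widehat p-\bar u)$. Then $\int\frac{w}{M}(1+X)g\dd p=4$, so by Markov the set $\Omega=\{\frac{w}{M}(1+X)\le 8/\rho_g\}$ carries half the mass, and hence $|\Omega|\ge\rho_g/(2\|g\|_\infty)$. On the other hand, on $\Omega$ one has $1\le(8M/\rho_g)^2w^{-2}(1+X)^{-2}$. Since $w^{-2}=\la p\ra^{-5}$ is exactly the Jacobian of $p\mapsto\widehat p$, the changes of variables $q=\widehat p$ and $y=\mathcal V_g^{-1/2}(q-\bar u)$ give $|\Omega|\le C\rho_g^{-2}M^2(\det\mathcal V_g)^{1/2}=C\rho_g^{-2}\sqrt{\det A_g}$. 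Comparing the two bounds on $|\Omega|$ yields the lemma with no regime splitting. This is how the paper argues, and it is where the exponent $5/2$ is really used.
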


\begin{proof}
There is nothing to prove if $\rho_g=0$. Assume $\rho_g>0$ and write
\[
 w(p):=\la p\ra^{5/2},
 \qquad
 M:=\int_{\R^3}w(p)g(p)\dd p.
\]
We use the relativistic velocity
\[
 q=\widehat p=\frac{p}{\la p\ra}.
\]
Define its weighted mean and covariance by
\[
 \bar q
 :=
 \frac1M\int_{\R^3}q\,w(p)g(p)\dd p,
\]
and
\[
 \mathcal V_g
 :=
 \frac1M\int_{\R^3}
 (q-\bar q)\otimes(q-\bar q)\,w(p)g(p)\dd p.
\]
The matrix $\mathcal V_g$ is positive definite. Indeed, if
$\xi^T\mathcal V_g\xi=0$ for some $\xi\ne0$, then
\[
 \xi\cdot(q-\bar q)=0
\]
for almost every $p$ with $g(p)>0$. The map $p\mapsto\widehat p$ is a
smooth diffeomorphism from $\R^3$ onto $B_1$, while an affine plane in
$B_1$ has three-dimensional measure zero. This is impossible because
$\rho_g>0$.

Since
\[
 \int_{\R^3}(q-\bar q)w(p)g(p)\dd p=0,
\]
we have
\[
 \int_{\R^3}q\otimes q\,w(p)g(p)\dd p
 =M\bar q\otimes\bar q+M\mathcal V_g.
\]
Hence
\[
 A_g
 =
 \begin{pmatrix}
 M & M\bar q^T\\
 M\bar q & M\bar q\otimes\bar q+M\mathcal V_g
 \end{pmatrix}.
\]
Taking the Schur complement of the upper-left entry gives
\begin{equation}\label{eq:rvp-det-covariance}
 \det A_g=M^4\det\mathcal V_g.
\end{equation}

We now normalise the covariance. Set
\[
 X(p)
 :=
 (q-\bar q)^T\mathcal V_g^{-1}(q-\bar q).
\]
By the definition of $\mathcal V_g$,
\[
 \begin{aligned}
 \int_{\R^3}X(p)w(p)g(p)\dd p
 =
 \tr\left(
 \mathcal V_g^{-1}
 \int_{\R^3}(q-\bar q)\otimes(q-\bar q)w(p)g(p)\dd p
 \right)=3M.
 \end{aligned}
\]
Therefore
\begin{equation}\label{eq:rvp-normalised-average}
 \int_{\R^3}
 \frac{w(p)}M\bigl(1+X(p)\bigr)g(p)\dd p=4.
\end{equation}
Set
\[
 R:=\frac8{\rho_g}
\]
and define
\[
 \Omega
 :=
 \left\{
 p\in\R^3:
 \frac{w(p)}M\bigl(1+X(p)\bigr)\le R
 \right\}.
\]
By \eqref{eq:rvp-normalised-average} and Markov's inequality,
\[
 \int_{\Omega^c}g(p)\dd p
 \le
 \frac4R
 =
 \frac{\rho_g}{2}.
\]
Thus at least half of the mass lies in $\Omega$:
\[
 \int_\Omega g(p)\dd p\ge\frac{\rho_g}{2}.
\]
Since $g\le\|g\|_\infty$,
\begin{equation}\label{eq:omega-lower}
 |\Omega|
 \ge
 \frac{\rho_g}{2\|g\|_\infty}.
\end{equation}

It remains to estimate the volume of $\Omega$ from above. On $\Omega$,
\[
 1
 \le
 R^2M^2w(p)^{-2}\bigl(1+X(p)\bigr)^{-2}.
\]
Hence
\begin{equation}\label{eq:rvp-omega-upper-1}
 |\Omega|
 \le
 R^2M^2
 \int_{\R^3}
 \la p\ra^{-5}\bigl(1+X(p)\bigr)^{-2}\dd p.
\end{equation}
Here the exponent $5/2$ is exactly the one needed for the relativistic
change of variables. Indeed,
\[
 q=\widehat p=\frac{p}{\la p\ra}
\]
maps $\R^3$ onto $B_1$ and
\[
 D_p\widehat p
 =
 \la p\ra^{-1}I-\la p\ra^{-3}p\otimes p.
\]
It has two tangential eigenvalues $\la p\ra^{-1}$ and one radial
eigenvalue $\la p\ra^{-3}$. Thus
\begin{equation}\label{eq:rel-jacobian}
 \dd q=\la p\ra^{-5}\dd p.
\end{equation}
The square of the weight $w(p)=\la p\ra^{5/2}$ therefore produces
exactly the Jacobian in \eqref{eq:rel-jacobian}.

Using $q=\widehat p$ in \eqref{eq:rvp-omega-upper-1},
\[
 \begin{aligned}
 \int_{\R^3}
 \la p\ra^{-5}\bigl(1+X(p)\bigr)^{-2}\dd p
 &=
 \int_{B_1}
 \left[
 1+(q-\bar q)^T\mathcal V_g^{-1}(q-\bar q)
 \right]^{-2}\dd q.
 \end{aligned}
\]
Now set
\[
 y=\mathcal V_g^{-1/2}(q-\bar q).
\]
Since
\[
 \dd q=(\det\mathcal V_g)^{1/2}\dd y,
\]
we obtain
\[
 \begin{aligned}
 \int_{B_1}
 \left[
 1+(q-\bar q)^T\mathcal V_g^{-1}(q-\bar q)
 \right]^{-2}\dd q
 &\le
 (\det\mathcal V_g)^{1/2}
 \int_{\R^3}(1+|y|^2)^{-2}\dd y\\
 &\le
 C(\det\mathcal V_g)^{1/2}.
 \end{aligned}
\]
Together with \eqref{eq:rvp-det-covariance}, this gives
\[
 M^2(\det\mathcal V_g)^{1/2}=\sqrt{\det A_g},
\]
and therefore
\[
 |\Omega|
 \le
 CR^2\sqrt{\det A_g}
 \le
 C\rho_g^{-2}\sqrt{\det A_g}.
\]
Combining this with \eqref{eq:omega-lower},
\[
 \frac{\rho_g}{2\|g\|_\infty}
 \le
 C\rho_g^{-2}\sqrt{\det A_g}.
\]
Hence
\[
 \det A_g
 \ge
 c\|g\|_\infty^{-2}\rho_g^6.
\]
This completes the proof.
\end{proof}

\subsection{The density bound}

Lemma \ref{lem:det} has the power needed for Serre's theorem. In four
space--time dimensions, Serre controls $(\det A)^{1/3}$, while
\[
 (\det A)^{1/3}
 \ge
 c\|f_0\|_\infty^{-2/3}\rho^2.
\]
It remains to estimate the divergence of $A$.

For $\beta=0,1,2,3$, we use the row-wise convention
\[
 (\Div A)_\beta
 =
 \partial_tA_{\beta0}
 +
 \sum_{j=1}^3\partial_{x_j}A_{\beta j}.
\]
Since $z_0=1$ and $(z_1,z_2,z_3)=\widehat p$, the definition of $A$ gives
\[
 (\Div A)_\beta
 =
 \int_{\R^3}
 \la p\ra^{5/2}z_\beta(p)
 \bigl(\partial_tf+\widehat p\cdot\nabla_xf\bigr)\dd p.
\]
Using \eqref{eq:RVP} and integrating by parts in $p$,
\[
 (\Div A)_\beta
 =
 \gamma
 \int_{\R^3}
 f(t,x,p)E(t,x)\cdot
 \nabla_p\bigl(\la p\ra^{5/2}z_\beta(p)\bigr)\dd p.
\]
For $\beta=0$,
\[
 \big|\nabla_p\la p\ra^{5/2}\big|
 \le C\la p\ra^{3/2}.
\]
For $\beta=1,2,3$,
\[
 \la p\ra^{5/2}z_\beta(p)=\la p\ra^{3/2}p_\beta,
\]
and therefore
\[
 \big|\nabla_p(\la p\ra^{3/2}p_\beta)\big|
 \le C\la p\ra^{3/2}.
\]
Thus
\begin{equation}\label{eq:divA}
 |\Div A(t,x)|
 \le
 C|E(t,x)|m_{3/2}(t,x).
\end{equation}

The factor $m_{3/2}$ will be paired with $E\in L^6_x$. We therefore
need an $L^{6/5}_x$ estimate. We first record the standard kinetic
interpolation inequality used here and again in Section 3.

\begin{lemma}\label{lem:kinetic-interpolation}
Let $k>0$ and $0\le\ell\le k$. Then, for every $t$,
\[
 \|m_\ell(t)\|_{(k+3)/(\ell+3)}
 \le
 C_{k,\ell}
 \|f_0\|_\infty^{(k-\ell)/(k+3)}
 M_k(t)^{(\ell+3)/(k+3)}.
\]
\end{lemma}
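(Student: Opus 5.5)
The plan is the standard pointwise splitting in momentum, followed by integration in $x$. Fix $t$ and $x$, and write $f=f(t,x,p)$. Let $R>0$ be a radius to be chosen depending on $(t,x)$, and split $m_\ell(t,x)$ into the contributions from $\{\la p\ra\le R\}$ and $\{\la p\ra>R\}$.

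On the low-momentum set we use the bound $f\le\|f(t)\|_\infty=\|f_0\|_\infty$, which holds because the flow preserves the $L^\infty$ norm: it is measure preserving, since the vector field $(\widehat p,\gamma E)$ is divergence free in $(x,p)$. For the approximations we have $\|f(0)\|_\infty\le\|f_0\|_\infty$. Since $\la p\ra\ge1$, the set $\{\la p\ra\le R\}$ is contained in $\{|p|\le R\}$, which gives
\[
\int_{\la p\ra\le R}\la p\ra^\ell f\dd p\le C\|f_0\|_\infty R^{\ell+3}.
\]
On the high-momentum set we use $\la p\ra^\ell\le R^{\ell-k}\la p\ra^k$, valid because $\ell\le k$. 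This gives
\[
\int_{\la p\ra> R}\la p\ra^\ell f\dd p\le R^{\ell-k}m_k(t,x).
\]

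Next we optimize the radius. The two contributions balance when $R^{k+3}=m_k/\|f_0\|_\infty$. The case $R<1$ causes no harm: the low-momentum set is then empty and the high-momentum estimate remains valid with the same $R$. With this choice,
\[
m_\ell(t,x)\le C\|f_0\|_\infty^{(k-\ell)/(k+3)}m_k(t,x)^{(\ell+3)/(k+3)}.
\]
The exponent is $(\ell+3)/(k+3)\le1$. We raise both sides to the power $(k+3)/(\ell+3)$ and integrate in $x$. This yields
\[
\|m_\ell(t)\|_{(k+3)/(\ell+3)}^{(k+3)/(\ell+3)}\le C\|f_0\|_\infty^{(k-\ell)/(\ell+3)}M_k(t),
\]
which is exactly the claimed inequality after taking the $(\ell+3)/(k+3)$ power.

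There is no real obstacle here. The only points needing care are the following:
\begin{itemize}
\item the $L^\infty$ bound on $f(t)$ is justified through the smooth approximations, using the divergence-free transport field;
\item the edge case $\ell=0$, where $m_0=\rho$, is covered by the same argument;
\item the regime $m_k<\|f_0\|_\infty$, where $R<1$, is handled as noted above.
\end{itemize}
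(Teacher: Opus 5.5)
Your proof is correct and follows the paper's argument. Both split in momentum pointwise, use the $L^\infty$ bound on the low part and the $k$th moment on the high part, balance at $R^{k+3}=m_k/\|f_0\|_\infty$, then raise to the power $(k+3)/(\ell+3)$ and integrate in $x$.

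The only difference is cosmetic. The paper splits at $|p|=R$ with $R\ge1$, so it handles $m_k(t,x)\le\|f_0\|_\infty$ separately via $m_\ell\le m_k$. Your split at $\la p\ra=R$ works for every $R>0$ and removes that case. The one leftover point is where $m_k(t,x)=0$, i.e.\ $R=0$; there $f(t,x,\cdot)=0$ a.e., so $m_\ell(t,x)=0$ and the bound is trivial, but this should be stated.
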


\begin{proof}
The cases $\|f_0\|_\infty=0$ and $\ell=k$ are immediate. Assume
$0\le\ell<k$ and fix $x$. The Vlasov flow preserves the $L^\infty$
norm, hence
\[
 \|f(t)\|_\infty\le\|f_0\|_\infty.
\]
For $R\ge1$,
\[
 \begin{aligned}
 m_\ell(t,x)
 &\le
 \int_{|p|\le R}\la p\ra^\ell f\dd p
 +
 \int_{|p|>R}\la p\ra^\ell f\dd p\\
 &\le
 C\|f_0\|_\infty R^{\ell+3}
 +
 CR^{\ell-k}m_k(t,x).
 \end{aligned}
\]
If $m_k(t,x)\le\|f_0\|_\infty$, then
\[
 m_\ell(t,x)
 \le
 m_k(t,x)
 \le
 \|f_0\|_\infty^{(k-\ell)/(k+3)}
 m_k(t,x)^{(\ell+3)/(k+3)}.
\]
If $m_k(t,x)>\|f_0\|_\infty$, take
\[
 R=
 \left(
 \frac{m_k(t,x)}{\|f_0\|_\infty}
 \right)^{1/(k+3)}.
\]
The two terms then have the same size, and
\[
 m_\ell(t,x)
 \le
 C_{k,\ell}
 \|f_0\|_\infty^{(k-\ell)/(k+3)}
 m_k(t,x)^{(\ell+3)/(k+3)}.
\]
Raise this inequality to the power $(k+3)/(\ell+3)$ and integrate in
$x$.
\end{proof}

With $k=5/2$ and $\ell=3/2$,
\[
 \|m_{3/2}(t)\|_{11/9}
 \le
 C\|f_0\|_\infty^{2/11}
 M_{5/2}(t)^{9/11}.
\]
We also have, by H\"older's inequality with respect to
$f(t,x,p)\dd x\dd p$ and conservation of mass,
\[
 \|m_{3/2}(t)\|_1
 =M_{3/2}(t)
 \le
 \|f_0\|_1^{2/5}M_{5/2}(t)^{3/5}.
\]
Since
\[
 \frac56
 =
 \frac1{12}
 +
 \frac{11}{12}\frac9{11},
\]
interpolation between $L^1_x$ and $L^{11/9}_x$ gives
\begin{equation}\label{eq:m32}
 \|m_{3/2}(t)\|_{6/5}
 \le
 C
 \|f_0\|_\infty^{1/6}
 \|f_0\|_1^{1/30}
 M_{5/2}(t)^{4/5}.
\end{equation}

\begin{proposition}\label{prop:density}
Let $f$ be a smooth solution on $[0,T]$ corresponding to one of the
regularized initial data described above. Then
\[
 \|\rho\|_{L^2((0,T)\times\R^3)}
 \le
 C
 \left(
 \sup_{0\le t\le T}M_{5/2}(t)
 \right)^{2/3}
 +
 CT
 \left(
 \sup_{0\le t\le T}M_{5/2}(t)
 \right)^{8/5}.
\]
Moreover,
\begin{equation}\label{eq:fieldbound}
 \|E\|_{L^2(0,T;L^6(\R^3))}
 \le
 C\|\rho\|_{L^2((0,T)\times\R^3)}.
\end{equation}
Here $C$ depends only on $\|f_0\|_1$ and $\|f_0\|_\infty$. In
particular, it is independent of the regularization, the support of the
regularized solution, and the sign of $\gamma$.
\end{proposition}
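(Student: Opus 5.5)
The plan is to apply Theorem~\ref{thm:serre} to a space--time cutoff of $A$. Since $\Div A$ is not compactly supported in time, I fix a smooth $\chi(t)$ equal to $1$ on $[0,T]$, supported in $[-1,T+1]$, and extend $f$ trivially. A cleaner choice is to set $B=\mathbf 1_{[0,T]}(t)A(t,x)$. Then $\Div B$ equals the interior divergence on $(0,T)$ plus the boundary measures $A_{\beta0}(0,x)\delta_{t=0}-A_{\beta0}(T,x)\delta_{t=T}$. The total variation of the boundary terms is bounded by
\[
 \int|A_{\cdot 0}(0,x)|\dd x+\int|A_{\cdot0}(T,x)|\dd x\le 2\sup_t M_{5/2}(t),
\]
since $|z|\le\sqrt2$. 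Here $B\in L^1$, because $|A|\le 2m_{5/2}$ and $M_{5/2}$ is finite for the smooth, compactly supported approximations. $B$ is symmetric positive semidefinite, and Lemma~\ref{lem:det} applied with $g=f(t,x,\cdot)$ and $\|f(t)\|_\infty\le\|f_0\|_\infty$ gives $(\det B)^{1/3}\ge c\|f_0\|_\infty^{-2/3}\rho^2$ on $(0,T)\times\R^3$.

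For the interior divergence I use \eqref{eq:divA}, H\"older in $x$ with exponents $6$ and $6/5$, and \eqref{eq:m32}:
\[
 \int_0^T\!\!\int|\Div A|\dd x\dd t\le C\int_0^T\|E(t)\|_6\,\|m_{3/2}(t)\|_{6/5}\dd t
 \le C\,\bigl(\sup_t M_{5/2}\bigr)^{4/5}\int_0^T\|E(t)\|_6\dd t.
\]
Next I prove \eqref{eq:fieldbound}. Since $E=\nabla(-\Delta)^{-1}\rho$ is a Riesz potential of order one, Hardy--Littlewood--Sobolev gives $\|E(t)\|_6\le C\|\rho(t)\|_2$, and squaring and integrating in time yields \eqref{eq:fieldbound}; this holds for either sign of $\gamma$. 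Then Cauchy--Schwarz gives $\int_0^T\|E\|_6\le CT^{1/2}\|\rho\|_{L^2_{t,x}}$.

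Combining, Serre's inequality gives
\[
 \|\rho\|_{L^2_{t,x}}^2\le C\Bigl(\sup_t M_{5/2}+T^{1/2}\bigl(\sup_tM_{5/2}\bigr)^{4/5}\|\rho\|_{L^2_{t,x}}\Bigr)^{4/3}.
\]
Writing $S=\sup_t M_{5/2}$ and $Y=\|\rho\|_{L^2_{t,x}}$, this reads $Y^{3/2}\le C(S+T^{1/2}S^{4/5}Y)$. The first alternative gives $Y\le CS^{2/3}$. The second gives $Y^{1/2}\le CT^{1/2}S^{4/5}$, that is $Y\le CTS^{8/5}$. Summing the two alternatives produces exactly the claimed bound. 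The absorption is legitimate because $Y$ is finite for each smooth approximation: $\rho\in L^\infty_tL^1_x$, and $\rho$ is bounded by compact support. All constants depend only on $\|f_0\|_1$ and $\|f_0\|_\infty$ through Lemma~\ref{lem:det} and \eqref{eq:m32}.

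The main obstacle is justifying Theorem~\ref{thm:serre} with the time-truncated tensor. I need to check that the indicator cutoff is admissible, that is, $\Div B$ is a finite measure, which holds since $A$ is smooth in $t$. I also need to check that the boundary terms are only first-row entries of $A$, which are controlled by $M_{5/2}$ and not by a higher moment. If the indicator caused difficulties, I would use smooth cutoffs $\chi_\varepsilon$ and let $\varepsilon\to0$, getting the same bound from $\int|\chi_\varepsilon'|\dd t\le 2$.
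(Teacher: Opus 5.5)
Your proposal is correct and follows the paper's proof essentially step by step. The common steps are the zero extension of $A$ outside $(0,T)$, boundary measures controlled by $\sup_t M_{5/2}$, and the interior divergence bounded via \eqref{eq:divA}, H\"older, \eqref{eq:m32}, Hardy--Littlewood--Sobolev and Cauchy--Schwarz in time, followed by Serre's inequality combined with Lemma~\ref{lem:det}. The only difference is cosmetic: you solve $Y^{3/2}\le C(S+T^{1/2}S^{4/5}Y)$ by a two-case split, whereas the paper uses $(a+b)^{2/3}\le a^{2/3}+b^{2/3}$ and Young's inequality to absorb $\tfrac12\|\rho\|_{L^2}$; both routes rely on the finiteness of $\|\rho\|_{L^2_{t,x}}$ for the smooth approximation, which you correctly note.
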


\begin{proof}
Put
\[
 K_T:=\sup_{0\le t\le T}M_{5/2}(t).
\]
Extend $A$ by zero outside $(0,T)$. Since $|z(p)|\le C$,
\[
 |A(t,x)|
 \le
 C m_{5/2}(t,x),
\]
and hence
\[
 \int_{\R^4}|A(t,x)|\dd t\dd x
 \le
 CTK_T.
\]
The zero extension is therefore in $L^1(\R^4)$.

Its divergence has an interior part and two time-face measures. By
\eqref{eq:divA}, the interior part is bounded by $C|E|m_{3/2}$. For the
smooth regularized solution this is integrable. On $t=0$ and $t=T$,
\[
 |A_{\beta0}(t,x)|\le Cm_{5/2}(t,x),
 \qquad \beta=0,1,2,3,
\]
so the two boundary measures have total mass at most $CK_T$. Thus
\[
 \|\Div A\|_{\mathcal M(\R^4)}
 \le
 C\left(
 K_T+
 \int_0^T\int_{\R^3}|E|m_{3/2}\dd x\dd t
 \right).
\]
Theorem \ref{thm:serre} and Lemma \ref{lem:det} give
\begin{equation}\label{eq:serre-rho}
 \|\rho\|_{L^2((0,T)\times\R^3)}
 \le
 C\|f_0\|_\infty^{1/3}
 \left(
 K_T+
 \int_0^T\int_{\R^3}|E|m_{3/2}\dd x\dd t
 \right)^{2/3}.
\end{equation}
The Poisson equation and the Hardy--Littlewood--Sobolev inequality give
\[
 \|E(t)\|_6\le C\|\rho(t)\|_2.
\]
After integration in time,
\[
 \|E\|_{L^2(0,T;L^6)}
 \le
 C\|\rho\|_{L^2((0,T)\times\R^3)}.
\]
Using \eqref{eq:m32} and \eqref{eq:fieldbound},
\[
 \begin{aligned}
 \int_0^T\int_{\R^3}|E|m_{3/2}\dd x\dd t
 &\le
 \int_0^T\|E(t)\|_6\|m_{3/2}(t)\|_{6/5}\dd t\\
 &\le
 CT^{1/2}
 \|f_0\|_\infty^{1/6}
 \|f_0\|_1^{1/30}
 K_T^{4/5}
 \|\rho\|_{L^2((0,T)\times\R^3)}.
 \end{aligned}
\]
Insert this into \eqref{eq:serre-rho}. Since
$(a+b)^{2/3}\le a^{2/3}+b^{2/3}$,
\[
 \begin{aligned}
 \|\rho\|_{L^2((0,T)\times\R^3)}
 \le {}
 C\|f_0\|_\infty^{1/3}K_T^{2/3}
 +
 CT^{1/3}
 \|f_0\|_\infty^{4/9}
 \|f_0\|_1^{1/45}
 K_T^{8/15}
 \|\rho\|_{L^2((0,T)\times\R^3)}^{2/3}.
 \end{aligned}
\]
Young's inequality gives
\[
 \begin{aligned}
 CT^{1/3}
 \|f_0\|_\infty^{4/9}
 \|f_0\|_1^{1/45}
 K_T^{8/15}
 \|\rho\|_{L^2((0,T)\times\R^3)}^{2/3}\le
 \frac12\|\rho\|_{L^2((0,T)\times\R^3)}
 +
 CT\|f_0\|_\infty^{4/3}
 \|f_0\|_1^{1/15}K_T^{8/5}.
 \end{aligned}
\]
Absorbing the first term gives the stated bound.
\end{proof}
 
\section{Propagation of moments}

We keep the smooth regularized solutions from Section 2. The first step is to place the field in the space that is dual to the
local $(k-1)$st moment.

\begin{lemma}\label{lem:field-k}
Let $k\ge5/2$. Then
\begin{equation}\label{eq:field-k}
 \|E\|_{L^3(0,T;L^{k+3}(\R^3))}
 \le
 C_k\|\rho\|_{L^2((0,T)\times\R^3)}^{2/3}
 \|f_0\|_\infty^{k/[3(k+3)]}
 \left(
 \sup_{0\le t\le T}M_k(t)
 \right)^{1/(k+3)}.
\end{equation}
\end{lemma}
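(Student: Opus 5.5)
The estimate is proved at a fixed time and then integrated in time. It combines three ingredients: the Hardy--Littlewood--Sobolev bound for the Poisson field, the kinetic interpolation of Lemma~\ref{lem:kinetic-interpolation} with $\ell=0$, and H\"older interpolation of $\rho$ between $L^2_x$ and $L^{(k+3)/3}_x$.

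\textbf{Step 1: the field.} Since $E=-\nabla U$ with $-\Delta U=\rho$, the kernel of $E$ is homogeneous of degree $-2$. The Hardy--Littlewood--Sobolev inequality therefore gives
\[
\|E(t)\|_{k+3}\le C_k\|\rho(t)\|_r,\qquad \frac1r=\frac1{k+3}+\frac13=\frac{k+6}{3(k+3)},
\]
valid as long as $1<r<3/2$. For $k\ge5/2$ we have $r=3(k+3)/(k+6)\in[33/17,3)$, which lies outside $(1,3/2)$, so HLS from $L^r$ does not apply directly. I would use instead the Gagliardo--Nirenberg/Sobolev form: $\|E\|_{k+3}\le C\|\nabla E\|_{r}$ with $\nabla E$ given by a Calder\'on--Zygmund operator of $\rho$. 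This yields
\[
\|E(t)\|_{k+3}\le C_k\|\rho(t)\|_r
\]
for $1<r<3$, i.e.\ for all finite $k$, since $r<3$ always holds. This choice of field estimate is the one point that needs care.

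\textbf{Step 2: interpolation of $\rho$.} Lemma~\ref{lem:kinetic-interpolation} with $\ell=0$ gives
\[
\|\rho(t)\|_{(k+3)/3}\le C\|f_0\|_\infty^{k/(k+3)}M_k(t)^{3/(k+3)}.
\]
Since $k\ge5/2$, we have $(k+3)/3\ge 11/6$. Next I check that $r$ lies between $2$ and $(k+3)/3$. For $k\ge3$, $2\le r\le (k+3)/3$; for $k\in[5/2,3)$, both $r$ and $(k+3)/3$ are below $2$. In either case H\"older interpolation applies:
\[
\|\rho\|_r\le\|\rho\|_2^{\theta}\|\rho\|_{(k+3)/3}^{1-\theta},\qquad \frac1r=\frac\theta2+\frac{3(1-\theta)}{k+3}.
\]
Substituting $1/r=(k+6)/(3(k+3))$ gives $\theta\bigl(\tfrac12-\tfrac3{k+3}\bigr)=\tfrac{k-3}{3(k+3)}$, i.e.\ $\theta\cdot\frac{k-3}{2(k+3)}=\frac{k-3}{3(k+3)}$. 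Hence $\theta=2/3$ whenever $k\ne3$, and at $k=3$ we have $r=2$, where the choice $\theta=2/3$ also works trivially. The exponent $2/3$ is exactly the one appearing in \eqref{eq:field-k}.

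\textbf{Step 3: integration in time.} Combining Steps 1 and 2,
\[
\|E(t)\|_{k+3}\le C_k\|\rho(t)\|_2^{2/3}\Bigl(\|f_0\|_\infty^{k/(k+3)}M_k(t)^{3/(k+3)}\Bigr)^{1/3}.
\]
Cubing and integrating over $(0,T)$ gives
\[
\int_0^T\|E\|_{k+3}^3\,dt\le C\|\rho\|_{L^2_{t,x}}^2\|f_0\|_\infty^{k/(k+3)}\sup_t M_k^{3/(k+3)}.
\]
Taking cube roots yields exactly the powers in \eqref{eq:field-k}: $\|f_0\|_\infty^{k/[3(k+3)]}$ and $(\sup_t M_k)^{1/(k+3)}$.

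The main obstacle is Step 1. Plain HLS from $E\sim\nabla\Delta^{-1}\rho$ requires $r<3/2$, which fails here, so I would justify $\|E\|_{k+3}\lesssim\|\rho\|_r$ through $\nabla E=\nabla^2\Delta^{-1}\rho$ (Calder\'on--Zygmund, bounded on $L^r$ for $1<r<\infty$) together with the Sobolev embedding $\dot W^{1,r}\subset L^{k+3}$, valid for $r<3$. The decay of $E$ at infinity needed to apply the homogeneous Sobolev inequality holds for the smooth compactly supported regularized solutions.
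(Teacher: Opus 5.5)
Your proof is correct and follows the paper's argument: kinetic interpolation with $\ell=0$, H\"older interpolation of $\rho$ between $L^2_x$ and $L^{(k+3)/3}_x$ with weight $\theta=2/3$, the field bound $\|E(t)\|_{k+3}\le C\|\rho(t)\|_{r_k}$ for $r_k=3(k+3)/(k+6)$, and then cubing and integrating in time. The one deviation is your Step 1, where the obstruction you identify does not exist: $|E|\le C|x|^{-2}*\rho$ is the Riesz potential of order one in $\R^3$, so Hardy--Littlewood--Sobolev maps $L^r\to L^{3r/(3-r)}$ for every $1<r<3$, not only for $r<3/2$, and the paper applies it directly; your Calder\'on--Zygmund plus Sobolev detour is valid but unnecessary.
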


\begin{proof}
Lemma \ref{lem:kinetic-interpolation} with $\ell=0$ gives
\[
 \|\rho(t)\|_{(k+3)/3}
 \le
 C_k\|f_0\|_\infty^{k/(k+3)}
 M_k(t)^{3/(k+3)}.
\]
Choose
\[
 r_k:=\frac{3(k+3)}{k+6}.
\]
This exponent is chosen so that the Riesz potential of order one maps
$L^{r_k}$ to $L^{k+3}$:
\[
 \frac1{k+3}=\frac1{r_k}-\frac13.
\]
Also,
\[
 \frac1{r_k}
 =
 \frac23\frac12
 +
 \frac13\frac3{k+3}.
\]
Hence
\[
 \|\rho(t)\|_{r_k}
 \le
 \|\rho(t)\|_2^{2/3}
 \|\rho(t)\|_{(k+3)/3}^{1/3}.
\]
The Hardy--Littlewood--Sobolev inequality therefore gives
\[
 \|E(t)\|_{k+3}
 \le
 C_k
 \|f_0\|_\infty^{k/[3(k+3)]}
 M_k(t)^{1/(k+3)}
 \|\rho(t)\|_2^{2/3}.
\]
Taking the $L^3$ norm in time gives \eqref{eq:field-k}, since
$(\|\rho(t)\|_2^{2/3})^3=\|\rho(t)\|_2^2$.
\end{proof}

We next use the standard differential inequality for $M_k$. Since
\[
 \nabla_p\la p\ra^k
 =
 k\la p\ra^{k-2}p,
\]
the Vlasov equation gives
\[
 \frac{\dd}{\dd t}M_k(t)
 =
 \gamma k
 \iint
 f(t,x,p)E(t,x)\cdot p\,
 \la p\ra^{k-2}\dd p\dd x.
\]
Thus
\[
 \left|\frac{\dd}{\dd t}M_k(t)\right|
 \le
 C_k\int_{\R^3}|E(t,x)|m_{k-1}(t,x)\dd x.
\]
Lemma \ref{lem:kinetic-interpolation} with $\ell=k-1$ gives
\[
 \|m_{k-1}(t)\|_{(k+3)/(k+2)}
 \le
 C_k\|f_0\|_\infty^{1/(k+3)}
 M_k(t)^{(k+2)/(k+3)}.
\]
The exponent $(k+3)/(k+2)$ is dual to $k+3$. Hence
\begin{equation} 
 \left|\frac{\dd}{\dd t}M_k(t)\right|
 \le
 C_k\|f_0\|_\infty^{1/(k+3)}
 M_k(t)^{(k+2)/(k+3)}
 \|E(t)\|_{k+3}.\nonumber
\end{equation}
The power of $M_k$ is removed by taking the $(k+3)$rd root. We obtain
\begin{equation}\label{eq:moment-root}
 (1+M_k(t))^{1/(k+3)}
 \le
 (1+M_k(0))^{1/(k+3)}
 +
 C_k\|f_0\|_\infty^{1/(k+3)}
 \int_0^t\|E(s)\|_{k+3}\dd s.
\end{equation}

\begin{proposition}\label{prop:moment}
Let $k\ge5/2$. There exist $T_*>0$ and $C>0$, depending only on
\[
 k,\qquad
 \|f_0\|_1,\qquad
 \|f_0\|_\infty,\qquad
 M_k(0),
\]
such that every smooth regularized solution satisfies
\[
 \sup_{0\le t\le T_*}M_k(t)\le C.
\]
Both $T_*$ and $C$ are independent of the regularization.
\end{proposition}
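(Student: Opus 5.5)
We run a continuity (bootstrap) argument on the quantity
\[
 K(t):=\sup_{0\le s\le t}M_k(s),
\]
which is finite and continuous for each smooth regularized solution. Fix the target bound $\Lambda:=2^{k+3}(1+M_k(0))$, and let $T_1\le 1$ be the largest time in $[0,1]$ such that $K(t)\le\Lambda$ on $[0,T_1]$. The goal is to show that, for $T_*$ small enough depending only on $k,\|f_0\|_1,\|f_0\|_\infty,M_k(0)$, on $[0,\min(T_1,T_*)]$ one actually has the strictly better bound $M_k\le \Lambda/2$, say. Continuity then forces $T_1\ge T_*$.

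The chain of estimates follows the cycle described in the introduction. Fix $T\le\min(T_1,1)$.

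\textbf{Reduction to $M_{5/2}$.} Since $k\ge5/2$, H\"older's inequality with respect to $f\dd x\dd p$ and conservation of mass give
\[
 M_{5/2}(t)\le\|f_0\|_1^{1-5/(2k)}M_k(t)^{5/(2k)}\le C(\Lambda).
\]

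\textbf{Density bound.} Proposition~\ref{prop:density} then yields
\[
 \|\rho\|_{L^2((0,T)\times\R^3)}\le C(\Lambda),
\]
uniformly in the regularization, using $T\le1$.

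\textbf{Field bound.} Lemma~\ref{lem:field-k} gives
\[
 \|E\|_{L^3(0,T;L^{k+3})}\le C(\Lambda).
\]

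\textbf{Moment bound.} By H\"older in time, $\int_0^T\|E(s)\|_{k+3}\dd s\le T^{2/3}\|E\|_{L^3(0,T;L^{k+3})}$. Inserting this into \eqref{eq:moment-root} gives
\[
 (1+M_k(t))^{1/(k+3)}\le(1+M_k(0))^{1/(k+3)}+C_1(\Lambda)\,T^{2/3}.
\]
Choose $T_*\le 1$ so small that $C_1(\Lambda)T_*^{2/3}\le (1+M_k(0))^{1/(k+3)}/2$. Then
\[
 1+M_k(t)\le (3/2)^{k+3}(1+M_k(0))<\Lambda
\]
on $[0,\min(T_1,T_*)]$. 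This strict inequality closes the bootstrap. Since $\Lambda$ depends only on $M_k(0)$ and $k$, the resulting $T_*$ and the constant $C=\Lambda$ are independent of the regularization.

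The main point to check carefully is that every constant in the chain depends only on $\Lambda$ and the listed data, and not on $T$ in a bad way. Proposition~\ref{prop:density} has the factor $T K_T^{8/5}$, which is harmless because we restrict to $T\le1$. The crucial positive power $T^{2/3}$ comes solely from the final H\"older step in time, which is where smallness of the time interval is exploited. A second minor point is justifying continuity of $t\mapsto M_k(t)$ for the smooth, compactly supported regularized solutions, so that the continuity argument is legitimate. This holds because the momentum support stays bounded on compact subintervals of the existence time of each approximate solution. We may also need that the smooth solution exists on $[0,T_*]$; this follows from the continuation criterion, since $M_{5/2}$ stays bounded there.
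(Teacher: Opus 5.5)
Your proposal is correct and follows the paper's proof essentially step for step. It uses the same cycle: H\"older reduction to $M_{5/2}$, Proposition~\ref{prop:density}, Lemma~\ref{lem:field-k}, and \eqref{eq:moment-root} with H\"older in time supplying $T^{2/3}$. This is closed by a continuity argument at the ceiling $(2Y_0)^{k+3}$, which is exactly your $\Lambda$. The paper keeps the $T$-dependence explicit in \eqref{eq:bootstrap} where you freeze constants as $C(\Lambda)$ with $T\le1$. Your remark on existence of the regularized solution up to $T_*$ covers a point the paper leaves implicit.
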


\begin{proof}
For $T>0$, put
\[
 \mathcal Y(T)
 :=
 \sup_{0\le t\le T}(1+M_k(t))^{1/(k+3)},
 \qquad
 Y_0:=(1+M_k(0))^{1/(k+3)}.
\]
The assumption $k\ge5/2$ enters here. By interpolation of momentum
moments,
\[
 M_{5/2}(t)
 \le
 \|f_0\|_1^{1-5/(2k)}M_k(t)^{5/(2k)}.
\]
Therefore
\begin{equation} 
 \sup_{0\le t\le T}M_{5/2}(t)
 \le
 C(\|f_0\|_1)
 \mathcal Y(T)^{5(k+3)/(2k)}.\nonumber
\end{equation}
Proposition \ref{prop:density} then gives
\begin{equation}\label{eq:rho-Y}
 \|\rho\|_{L^2((0,T)\times\R^3)}
 \le
 C\mathcal Y(T)^{5(k+3)/(3k)}
 +
 CT\mathcal Y(T)^{4(k+3)/k}.
\end{equation}
Here and below $C$ depends only on
$k$, $\|f_0\|_1$, and $\|f_0\|_\infty$.

Lemma \ref{lem:field-k} gives
\[
 \|E\|_{L^3(0,T;L^{k+3})}
 \le
 C\mathcal Y(T)
 \|\rho\|_{L^2((0,T)\times\R^3)}^{2/3}.
\]
Using \eqref{eq:moment-root} and H\"older's inequality in time,
\[
 \begin{aligned}
 \mathcal Y(T)
 &\le
 Y_0+C\int_0^T\|E(t)\|_{k+3}\dd t\\
 &\le
 Y_0+CT^{2/3}
 \|E\|_{L^3(0,T;L^{k+3})}\\
 &\le
 Y_0+CT^{2/3}\mathcal Y(T)
 \|\rho\|_{L^2((0,T)\times\R^3)}^{2/3}.
 \end{aligned}
\]
Insert \eqref{eq:rho-Y}. Since
$(a+b)^{2/3}\le a^{2/3}+b^{2/3}$,
\begin{equation}\label{eq:bootstrap}
 \mathcal Y(T)
 \le
 Y_0
 +
 C_1T^{2/3}
 \mathcal Y(T)^{1+10(k+3)/(9k)}
 +
 C_2T^{4/3}
 \mathcal Y(T)^{1+8(k+3)/(3k)}.
\end{equation}
Choose $T_*>0$ so small that
\[
 C_1T_*^{2/3}(2Y_0)^{1+10(k+3)/(9k)}
 +
 C_2T_*^{4/3}(2Y_0)^{1+8(k+3)/(3k)}
 \le
 \frac{Y_0}{2}.
\]
For the smooth solution, $M_k(t)$ is continuous. If
$\mathcal Y(T)$ reached $2Y_0$ for the first time at some
$\tau\le T_*$, then \eqref{eq:bootstrap} with $T=\tau$ would give
\[
 2Y_0\le Y_0+\frac{Y_0}{2},
\]
a contradiction. Hence
\[
 \mathcal Y(T)\le2Y_0
 \qquad (0\le T\le T_*),
\]
and therefore
\[
 \sup_{0\le t\le T_*}M_k(t)
 \le
 (2Y_0)^{k+3}.
\]
This completes the proof.
\end{proof}

The estimate above is uniform in the regularization parameter. Together
with the bounds from Section 2, it gives the uniform a priori control
needed in Theorem \ref{thm:main}. The remaining compactness step is the
standard one used for the regularized Vlasov--Poisson system \cite{ChenChen, Pallard} and is therefore omitted.

\paragraph{Data/Code Availability.}
The manuscript has no associated data or code.

\paragraph{Conflict of interest.}
The author declares that he has no conflict of interest.

\end{document}